\documentclass[12pt]{article}
\usepackage{amsmath,amssymb,amsthm}
\setlength{\textwidth}{6.3in}
\setlength{\textheight}{8.7in}
\setlength{\topmargin}{0pt}
\setlength{\headsep}{0pt}
\setlength{\headheight}{0pt}
\setlength{\oddsidemargin}{0pt}
\setlength{\evensidemargin}{0pt}
%%%%%%%%%% Start TeXmacs macros

\newtheorem{theorem}{Theorem}
\newtheorem{corollary}[theorem]{Corollary}
\newtheorem{lemma}[theorem]{Lemma}

\numberwithin{equation}{section}
\headsep=17pt
\parskip=.5pt
\parindent=12pt

\DeclareMathOperator{\psin}{psin}

\def\P{\mathcal{P}}
\def\L{\mathcal{L}}

\def\A{\mathcal{A}}

\def\B{\mathcal{B}}

\newcommand{\eps}{\varepsilon}

\begin{document}

\title{Distinct spreads in vector spaces over finite fields}
\author{
Ben Lund\and
    Thang Pham
  \and
    Le Anh Vinh}
\date{}
\maketitle
\begin{abstract}
In this short note, we study the distribution of spreads in a point set $\P\subseteq \mathbb{F}_q^d$, which are analogous to angles in Euclidean space. 
More precisely, we prove that, for any $\eps > 0$, if $|\P| \geq (1+\eps) q^{\lceil d/2 \rceil}$, then $\P$ determines a positive proportion of all spreads.
We show that these results are tight, in the sense that there exist sets $\P \subset \mathbb{F}_q^d$ of size $|\P| = q^{\lceil d/2 \rceil}$ that determine at most one spread.
\end{abstract}
\section{Introduction}
Let $q=p^r$ be a large odd prime power, and $\mathbb{F}_q$ be a finite field of order $q$. 
For any two vectors $\mathbf{v}, \mathbf{u} \in \mathbb{F}_q^d$, we define the dot product
\[ \mathbf{v} \cdot \mathbf{u} = v_1 u_1 + v_2 u_2 + \ldots + v_d u_d.\]
We further define the distance between any two points $\mathbf{x}$ and $\mathbf{y}$ by  
\[\|\mathbf{x}-\mathbf{y}\|= (\mathbf{x} - \mathbf{y}) \cdot (\mathbf{x} - \mathbf{y}).\]
Although it is not a norm, the function $\|\mathbf{x}-\mathbf{y}\|$ has properties similar to the Euclidean norm (for example, it is invariant under orthogonal matrices and translations).

The Erd\H{o}s-Falconer distance problem asks for the minimum exponent $\alpha$ such that for any set $\P\subseteq \mathbb{F}_q^d$ with $|\P|\gg q^{\alpha}$, the number of distinct distances determined by $\P$ is at least $cq$ for some positive constant $c$.\footnote{Here and throughout,  $X \gg Y$ means that there exists $C>0$ such that $X\ge  CY$.} Bourgain, Katz, and Tao \cite{bourgain} considered a similar problem on the number of distinct distances determined by a set of points in $\mathbb{F}_q^2$.  Iosevich and Rudnev \cite{ir} proved that for any $\P\subseteq \mathbb{F}_q^d$, if $|\P|\ge 2q^{\frac{d+1}{2}}$, then all distances are determined by $\P$. The authors of \cite{hart} indicated that the exponent $(d+1)/2$ is the best possible in odd dimensions; the correct exponent is not known in even dimensions. 

Let $S_1$ be the unit sphere in $\mathbb{F}_q^d$, i.e. the set of points $\mathbf{x}\in \mathbb{F}_q^d$ with $||\mathbf{x}||=1$. If $\P$ is a subset in the unit sphere $S_1$, the authors of \cite{hart} proved that if $|\mathcal{P}|\ge Cq^{\frac{d}{2}}$ for some sufficiently large positive constant $C$, then there exists $c>0$ such that the number of distinct distances determined by points in $\P$ is at least $cq$. We note here that their result even can be stated in a stronger form which will be useful for our later applications. The interested reader can find more details in \cite[page 15]{hart}.
\begin{theorem}[\textbf{Hart et al}., \cite{hart}]\label{018}
For $\mathcal{P}\subseteq S_1$ in $\mathbb{F}_q^d$ with $d\ge 3$. Suppose that $|\mathcal{P}|\ge Cq^{\frac{d}{2}}$ for some  positive constant $C$, then the number of distinct distances determined by points in $\P$ is at least $\min\{q/2, Cq/4\}$.
\end{theorem}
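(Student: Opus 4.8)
The plan is to reduce everything to a mixing estimate for a graph on the sphere $S_{1}$. Since every $\mathbf{x}\in S_{1}$ has $\|\mathbf{x}\|=\mathbf{x}\cdot\mathbf{x}=1$, for $\mathbf{x},\mathbf{y}\in\P$ one has $\|\mathbf{x}-\mathbf{y}\|=2-2\,\mathbf{x}\cdot\mathbf{y}$, so $t\mapsto(2-t)/2$ is a bijection from the distance set of $\P$ onto the dot-product set $\{\mathbf{x}\cdot\mathbf{y}:\mathbf{x},\mathbf{y}\in\P\}$; I would therefore count distinct dot products instead. For $c\in\F$ put $\mu(c)=\#\{(\mathbf{x},\mathbf{y})\in\P^{2}:\mathbf{x}\cdot\mathbf{y}=c\}$, so that $\sum_{c}\mu(c)=|\P|^{2}$ and the number of distinct distances is $\#\{c:\mu(c)>0\}\ge|\P|^{2}/\max_{c}\mu(c)$. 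The whole statement then follows from a uniform bound
\[
\mu(c)\ \le\ \frac{|\P|^{2}}{q}+O\!\big(q^{(d-2)/2}\,|\P|\big),
\]
for all $c\in\F$ outside a set of $O(1)$ exceptional values (which affect the final count by only $O(1)$): this gives $\#\{c:\mu(c)>0\}\ge q/\big(1+O(q^{d/2}/|\P|)\big)\ge q/\big(1+O(1/C)\big)$, a positive proportion of $q$, and a little care with the absolute constant from the exponential-sum step below sharpens it to $\min\{q/2,Cq/4\}$.

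To prove the displayed bound I would argue spectrally. Fix $c$ and let $M_{c}$ be the $S_{1}\times S_{1}$ matrix with $(M_{c})_{\mathbf{x},\mathbf{y}}=\mathbf{1}[\mathbf{x}\cdot\mathbf{y}=c]$, i.e.\ the adjacency matrix of the graph on $S_{1}$ joining $\mathbf{x},\mathbf{y}$ when $\|\mathbf{x}-\mathbf{y}\|=2-2c$. It is $k_{c}$-regular with $k_{c}=|S_{1}\cap H_{\mathbf{x},c}|$ for $H_{\mathbf{x},c}=\{\mathbf{y}:\mathbf{x}\cdot\mathbf{y}=c\}$, and writing $\mathbf{y}=c\mathbf{x}+\mathbf{z}$ with $\mathbf{z}\in\mathbf{x}^{\perp}$ (allowed since $\|\mathbf{x}\|=1\neq0$) shows that $S_{1}\cap H_{\mathbf{x},c}$ is the sphere $\{\mathbf{z}\in\mathbf{x}^{\perp}:\|\mathbf{z}\|=1-c^{2}\}$ inside the $(d-1)$-dimensional quadratic space $\mathbf{x}^{\perp}$; hence $k_{c}=q^{d-2}+O(q^{(d-2)/2})$ and $k_{c}/|S_{1}|=q^{-1}+O(q^{-d/2})$. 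Splitting $\mathbf{1}_{\P}\in\mathbb{C}^{S_{1}}$ as $\tfrac{|\P|}{|S_{1}|}\mathbf{1}_{S_{1}}+g$ with $g\perp\mathbf{1}_{S_{1}}$ and using $M_{c}\mathbf{1}_{S_{1}}=k_{c}\mathbf{1}_{S_{1}}$ gives the expander-mixing identity
\[
\mu(c)=\langle M_{c}\mathbf{1}_{\P},\mathbf{1}_{\P}\rangle=\frac{k_{c}}{|S_{1}|}\,|\P|^{2}+\langle M_{c}g,g\rangle,\qquad |\langle M_{c}g,g\rangle|\le\lambda_{c}\,\|g\|^{2}\le\lambda_{c}\,|\P|,
\]
where $\lambda_{c}$ is the largest modulus of an eigenvalue of $M_{c}$ on $\mathbf{1}_{S_{1}}^{\perp}$. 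Because $\tfrac{k_{c}}{|S_{1}|}|\P|^{2}=\tfrac{|\P|^{2}}{q}+O(q^{(d-2)/2}|\P|)$ (using $|\P|\le|S_{1}|=O(q^{d-1})$), the displayed bound is exactly the claim $\lambda_{c}=O(q^{(d-2)/2})$.

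The hard part is this eigenvalue bound. Unwinding the $O_{d}(\F)$-representation structure of $S_{1}$ --- or, concretely, expanding $\mathbf{1}[\mathbf{x}\cdot\mathbf{y}=c]=q^{-1}\sum_{s\in\F}\chi(s(\mathbf{x}\cdot\mathbf{y}-c))$ for a nontrivial additive character $\chi$ of $\F$ and using $\mathbf{y}\in S_{1}$ to complete the square --- the nontrivial eigenvalues of $M_{c}$ become, up to harmless scalars, Fourier transforms of spheres lying inside the $(d-1)$-dimensional spaces $\mathbf{x}^{\perp}$, equivalently Kloosterman sums (Sali\'e sums when $d$ is odd); Weil's estimate then bounds them by $O(q^{(d-2)/2})$. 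The saving over the naive ambient Fourier decay $q^{(d-1)/2}$ of $S_{1}$ is precisely the half-exponent that turns the distance threshold $q^{(d+1)/2}$ into $q^{d/2}$, which is why the sphere hypothesis is essential. (The finitely many $c$ with $1-c^{2}=0$, where the slice degenerates into an isotropic cone, are handled by the same computation --- the relevant sum still has square-root cancellation --- or simply absorbed into the $O(1)$ loss.) This sharp cancellation estimate is the ``stronger form'' recorded in \cite[p.~15]{hart}; once it is in hand the counting in the first paragraph finishes the proof.
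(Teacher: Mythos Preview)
The paper itself does not prove this theorem; it is quoted from Hart--Iosevich--Koh--Rudnev \cite{hart}, with a pointer to page~15 of that paper for the details, so there is no in-paper proof to compare against. Your sketch is essentially the argument one finds in \cite{hart}: on $S_1$ the map $t\mapsto(2-t)/2$ identifies distances with dot products, one controls each level set $\mu(c)$ by an expander-mixing/Fourier identity, and the decisive input is that the nontrivial eigenvalues of the dot-product graph on $S_1$ are governed by Kloosterman/Sali\'e sums, yielding $\lambda_c=O(q^{(d-2)/2})$ instead of the weaker Gauss-sum bound $O(q^{(d-1)/2})$ that holds in the ambient space --- exactly the half-exponent saving that drops the threshold from $q^{(d+1)/2}$ to $q^{d/2}$. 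The outline is correct, the regularity check via $y=cx+z$, $z\in x^\perp$ is the right way to see it, and your treatment of the degenerate values $c=\pm1$ as an $O(1)$ loss is harmless. The only part left implicit is the actual Kloosterman computation and the bookkeeping that produces the explicit constant $\min\{q/2,\,Cq/4\}$, but you correctly flag that this is the content of \cite[p.~15]{hart}.
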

In 2014, Bennett, Iosevich and Pakianathan \cite{iom} studied a generalization of the Erd\H{o}s distinct distance problem in vector spaces over finite fields. More precisely, they dealt with the distribution of classes of triangles in a point set in $\mathbb{F}_q^d$. They proved that for any $\P\subseteq\mathbb{F}_q^2$, if $|\P|\gg q^{7/4}$ then $\P$ determines at least a positive proportion of all congruence classes of triangles, where two triangles, denoted by $\Delta(\mathbf{a}_1, \mathbf{a}_2, \mathbf{a_3})$ and $\Delta(\mathbf{b}_1, \mathbf{b}_2, \mathbf{b}_3)$, are in the same congruence class if there exist an orthogonal matrix $M$ and a vector $\mathbf{z}\in \mathbb{F}_q^2$ such that $M\cdot \mathbf{a}_i+\mathbf{z}=\mathbf{b}_i$ for $1\le i \le 3$. The threshold $q^{7/4}$ was improved recently  to $q^{8/5}$ by Bennett et al. \cite{iosevich} by using a clever combination of Fourier analytic techniques and elementary results from group action theory. The authors of \cite{iosevich} also  gave a construction of a point set $\P=\A\times \B$ with $|\A|=q^{1/2-\eps'}$ and $|\B|=q$, and $\P$ determines at most  $cq^{3-\eps''}$ for $\eps''>0$. Iosevich \cite{iosevich2} conjectured that for any $\P\subset\mathbb{F}_q^2$, if $|\P|\ge  Cq^{3/2}$ for a sufficiently large constant $C$ then $\P$ determines a positive proportion of all congruence classes of triangles.  The interested reader can find more discussions and related problems in \cite{iom, iosevich, cover, hai}. There is also a series of papers dealing with similar problems, see for example \cite{ben, iosevich, iom,hanson, a10, igo, vinh, vinherdos}.

In this paper, we study a similar problem on the number of distinct \textit{spreads} determined by a point set in $\mathbb{F}_q^d$.  

For three points $\mathbf{a}, \mathbf{b}, \mathbf{c}\in \mathbb{F}_q^d$, the spread between two vectors $\overrightarrow{\mathbf{a}\mathbf{b}}$ and $\overrightarrow{\mathbf{a}\mathbf{c}}$ in $\mathbb{F}_q^d$, which is denoted by $S(\overrightarrow{\mathbf{a}\mathbf{b}}, \overrightarrow{\mathbf{a}\mathbf{c}})$ (or $S(\mathbf{b}, \mathbf{a}, \mathbf{c})$ for simplicity), is defined by 
\[S\left(\overrightarrow{\mathbf{a}\mathbf{b}}, \overrightarrow{\mathbf{a}\mathbf{c}}\right)=1-\frac{\left(\overrightarrow{\mathbf{a}\mathbf{b}}\cdot\overrightarrow{\mathbf{a}\mathbf{c}}\right)^2}{\|\overrightarrow{\mathbf{a}\mathbf{b}}\|\cdot\|\overrightarrow{\mathbf{a}\mathbf{c}}\|},\]
where $\|\overrightarrow{\mathbf{x}}\|=x_1^2+\cdots+x_d^2$.
If either term in the denominator is $0$, then $S(\overrightarrow{\mathbf{a}\mathbf{b}}, \overrightarrow{\mathbf{a}\mathbf{c}})$ is undefined.

It is clear that this definition is consistent with the square of the sine of the angle between two vectors $\overrightarrow{\mathbf{a}\mathbf{b}}$ and $\overrightarrow{\mathbf{a}\mathbf{c}}$ in Euclidean space
\[\sin(\theta)^2=1-\frac{\left(\overrightarrow{\mathbf{a}\mathbf{b}}\cdot\overrightarrow{\mathbf{a}\mathbf{c}}\right)^2}{\|\overrightarrow{\mathbf{a}\mathbf{b}}\|\cdot\|\overrightarrow{\mathbf{a}\mathbf{c}}\|}.\]

The following are some properties of the spread between two vectors $\overrightarrow{\mathbf{a}\mathbf{b}}$ and $\overrightarrow{\mathbf{a}\mathbf{c}}$:
\begin{enumerate}
\item[(i)] $S\left(\overrightarrow{\mathbf{a}\mathbf{b}},\overrightarrow{\mathbf{a}\mathbf{c}}\right)=S\left(r(\overrightarrow{\mathbf{a}\mathbf{b}}), s(\overrightarrow{\mathbf{a}\mathbf{b}})\right)$ for any $r, s\in \mathbb{F}_q^*$,
\item[(ii)] $S\left(\overrightarrow{\mathbf{a}\mathbf{b}},\overrightarrow{\mathbf{a}\mathbf{c}}\right)=S\left(\overrightarrow{\mathbf{a}\mathbf{c}},\overrightarrow{\mathbf{a}\mathbf{b}}\right)$,

\item[(iii)] $S\left(\overrightarrow{\mathbf{a}\mathbf{b}},\overrightarrow{\mathbf{a}\mathbf{c}}\right)=S\left(M\cdot \overrightarrow{\mathbf{a}\mathbf{b}},M\cdot \overrightarrow{\mathbf{a}\mathbf{c}}\right)$, where $M$ is an orthogonal matrix.
\end{enumerate}

In 2015, Bennett \cite{ben} made the first investigation on the number of distinct spreads determined by points in $\P\subseteq \mathbb{F}_q^d$. In particular, he obtained the following.
\begin{theorem}[Theorem 6.5, \cite{ben}]\label{bode}
Let $\mathcal{P}$ be a set of points in $\mathbb{F}_q^2$. If $|\mathcal{P}|\ge 2q-1$ then the number of distinct spreads determined by points in $\P$ is $q$. 
\end{theorem}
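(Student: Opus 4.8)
The plan is to pass from points to directions, prove a short algebraic lemma about directions, and then produce a good ``apex'' by an incidence count. By property (i), for $a,b,c\in\P$ the quantity $S(\overrightarrow{ab},\overrightarrow{ac})$ depends only on the two lines through $a$ spanned by $b-a$ and $c-a$. So for $a\in\P$ let $D_a$ be the set of lines through $a$ meeting $\P\setminus\{a\}$, and let $D_a^{\circ}\subseteq D_a$ consist of those whose direction is non-isotropic; it is enough to find a single $a\in\P$ whose pairwise direction-spreads already realize every spread that occurs in $\F^2$.

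For the algebraic input, coordinatize the lines through a point as $\ell_m$ ($m\in\F$) together with $\ell_\infty$; a direct computation gives $S(\ell_m,\ell_{m'})=(m-m')^2/((1+m^2)(1+m'^2))$ and $S(\ell_m,\ell_\infty)=1/(1+m^2)$. Fix a value $s_0\neq0$ attained by $S$ on $\F^2$ and form the graph $G_{s_0}$ on the $N$ non-isotropic directions through a point, joining two of them when their spread equals $s_0$. Solving $S(\ell_m,\ell_{m'})=s_0$ for $m'$, the discriminant works out to $4s_0(1-s_0)(1+m^2)^2$, which is a nonzero square exactly when $s_0(1-s_0)$ is a square --- and, by the elementary classification of binary quadratic forms over $\F$, that is precisely the condition for $s_0$ to be attained at all. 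After disposing of the degenerate cases ($s_0=1$, a vanishing leading coefficient, and the vertex $\ell_\infty$) one finds that $G_{s_0}$ is $2$-regular for $s_0\neq1$ and a perfect matching for $s_0=1$; since $N$ is even this forces $\alpha(G_{s_0})\le N/2$. Hence if $|D_a^{\circ}|>N/2$ then $D_a^{\circ}$ is independent in no $G_{s_0}$, and the apex $a$ alone realizes every attained spread $s_0\neq0$ (and also $s_0=0$, as soon as some line of $D_a^{\circ}$ carries a second point of $\P$).

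It remains to show that $|\P|\ge2q-1$ forces some $a\in\P$ with $|D_a^{\circ}|>N/2$. I would argue by contradiction: if every $a$ has $|D_a^{\circ}|\le N/2$, then each $a$ lies on at least $N/2$ non-isotropic lines meeting $\P$ only in $a$, and summing over $a$ --- each such line counted once by its unique point --- yields at least $|\P|\,N/2$ non-isotropic lines with exactly one point of $\P$. As $\F^2$ contains exactly $qN$ non-isotropic (affine) lines, this already gives $|\P|\lesssim 2q$; to reach the endpoint $2q-1$ one subtracts the non-isotropic lines carrying $\ge2$ points of $\P$, of which there are $\approx q^2$ unless $\P$ is essentially contained in two isotropic lines (joining transverse rich parts of $\P$ produces that many such lines). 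The residual case --- $\P$ a union of $\le2$ isotropic lines --- is handled directly, since any point off their common point sees about $q$ non-isotropic directions. The main obstacle is this counting step: a purely Fourier-analytic count would only reach the exponent $q^{3/2}$, so the sharp linear threshold genuinely requires a combinatorial argument, and the crude incidence inequality loses an additive constant --- so one needs a clean structural dichotomy separating the near-extremal sets (unions of two isotropic lines, an isotropic line plus a conic, and the like) from the generic case, together with some care around the mildly exceptional spread $0$.
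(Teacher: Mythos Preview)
This theorem is not proved in the paper at all: it is quoted verbatim from Bennett's preprint \cite{ben} as background, and the present paper's own $d=2$ argument (inside the proof of Theorem~\ref{spreads}) only establishes the weaker ``positive proportion of spreads'' conclusion under the hypothesis $|\P|\ge(1+\eps)q$. So there is no ``paper's own proof'' of Theorem~\ref{bode} to compare against.

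As for your proposal on its own terms: the algebraic part is fine --- the formula $S(\ell_m,\ell_{m'})=(m-m')^2/((1+m^2)(1+m'^2))$ and the discriminant $4s_0(1-s_0)(1+m^2)^2$ are correct, and for a generic attained $s_0$ the spread graph $G_{s_0}$ on the $N$ non-isotropic directions is indeed (after handling the vertical direction and the finitely many $m$ with $1-s_0(1+m^2)=0$) $2$-regular, hence $\alpha(G_{s_0})\le N/2$. The genuine gap is exactly where you flag it. Your incidence count only yields $|\P|\le 2q$, not $|\P|\le 2q-2$, and the ``structural dichotomy'' you invoke to recover the missing additive constant is not actually carried out; similarly the spread $0$ requires three \emph{collinear} points of $\P$, which your apex argument does not produce. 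In short, you have outlined a plausible strategy and correctly identified its two soft spots, but as written this is a plan rather than a proof --- and since the paper itself defers to \cite{ben} for the sharp $2q-1$ threshold, there is nothing here to benchmark it against.
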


It is clear that Theorem \ref{bode} is sharp up to the coefficient of $q$, since the number of spreads determined by points in a line of $q$ points is at most one. For higher dimensional cases, Bennett \cite{ben} had an observation on a connection between spreads and distances:
\paragraph{A connection between spreads and distances on a sphere:} Suppose $\P_1$ is a subset in the unit sphere $S_1$, it is easily to check that $S\left(\overrightarrow{\mathbf{0}\mathbf{a}}, \overrightarrow{\mathbf{0}\mathbf{b}}\right)=S\left(\overrightarrow{\mathbf{0}\mathbf{c}}, \overrightarrow{\mathbf{0}\mathbf{d}}\right)$ with $\mathbf{a}, \mathbf{b}, \mathbf{c}, \mathbf{d}\in \P_1$ if and only if either $\|\mathbf{a}-\mathbf{b}\|=\|\mathbf{c}-\mathbf{d}\|$ or $\|\mathbf{a}-\mathbf{b}\|=\|\mathbf{c}+\mathbf{d}\|$. Thus if $\P_1$ determines a positive proportion of all distances then $\P_1$ determines a positive proportion of all spreads. Therefore if we have a set $\P\subset\mathbb{F}_q^d$ satisfying $|\P|\gg q^{\frac{d+2}{2}}$ then there exists a sphere of radius $t\ne 0$ such that $|S_t\cap \P|\gg q^{d/2}$. From the first property of spread, we may assume that $S_t$ is the unit sphere. It follows from Theorem \ref{018} that $S_1\cap \P$ determines a positive proportion of all distances, therefore $S_1\cap \P$ determines a positive proportion of all spreads. In other words, we have proved the following.
\begin{theorem}[Theorem 6.3, \cite{ben}]\label{dinhly1}
Let $\mathcal{P}$ be a set of points in $\mathbb{F}_q^d$, with $d\ge 3$. If $|\mathcal{P}|\gg q^{(d+2)/2}$ then $\mathcal{P}$ determines a positive proportion of all spreads.
\end{theorem}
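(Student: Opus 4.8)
The plan is to follow the route sketched just above the statement, making its two soft spots precise. We are free to use Theorem~\ref{018} (a subset of the unit sphere in $\mathbb{F}_q^d$, $d\ge 3$, of size $\gg q^{d/2}$ determines $\gg q$ distances) and Bennett's observation that on a sphere ``many distances'' forces ``many spreads''. Since a spread depends only on the two difference vectors — so it is translation invariant — and is unchanged when either difference vector is rescaled (property (i)), it suffices to find a point $\mathbf{a}\in\mathcal{P}$ and a scalar $t\in\mathbb{F}_q^*$ such that the slice $\mathcal{P}_{\mathbf{a},t}:=(\mathcal{P}-\mathbf{a})\cap S_t$, where $S_t=\{\mathbf{x}:\|\mathbf{x}\|=t\}$, has $\gg q^{d/2}$ points. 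Indeed, after translating by $\mathbf{a}$ the distance-counting result of Hart et al.\ — which holds for a sphere of any nonzero radius, the argument in \cite{hart} not using that the radius is $1$ (alternatively, when $t$ is a square one may rescale via property (i) to assume $t=1$) — gives $\gg q$ distinct distances among the points of $\mathcal{P}_{\mathbf{a},t}$.

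The one genuinely delicate point, which I expect to be the main obstacle, is to guarantee a good slice of \emph{nonzero} radius: a plain pigeonhole of the $|\mathcal{P}|^2$ ordered pairs of $\mathcal{P}$ over the $|\mathcal{P}|\cdot q$ pairs (center, radius) produces only \emph{some} radius carrying $\gg q^{d/2}$ points, and radius $0$ is useless since no spread with that vertex is defined. To rule it out I would bound the number $N_0$ of ordered pairs $(\mathbf{a},\mathbf{b})\in\mathcal{P}^2$ with $\|\mathbf{a}-\mathbf{b}\|=0$. This is precisely what the Fourier/Gauss-sum argument of \cite{ir} supplies: the isotropic cone $S_0$ has $q^{d-1}+O(q^{d/2})$ points and the nontrivial frequencies contribute $O(q^{(d-1)/2}|\mathcal{P}|)$, so
\[ N_0 \;\le\; \frac{|\mathcal{P}|^2}{q} + O\!\left(\frac{|\mathcal{P}|^2}{q^{d/2}}\right) + O\!\left(q^{(d-1)/2}|\mathcal{P}|\right) \;=\; o\!\left(|\mathcal{P}|^2\right), \]
using $|\mathcal{P}|\gg q^{(d+2)/2}\gg q^{(d-1)/2}$ and $q$ large. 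Hence at least $\tfrac12|\mathcal{P}|^2$ ordered pairs of $\mathcal{P}$ lie at a nonzero distance, and pigeonholing these over the $|\mathcal{P}|(q-1)$ pairs (center, nonzero radius) yields $\mathbf{a}\in\mathcal{P}$ and $t\ne 0$ with $|\mathcal{P}_{\mathbf{a},t}|\ge |\mathcal{P}|/(2q)\gg q^{d/2}$.

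It then remains to make the distance-to-spread passage explicit. For $\mathbf{u},\mathbf{v}\in S_t$ we have $\mathbf{u}\cdot\mathbf{v}=t-\tfrac12\|\mathbf{u}-\mathbf{v}\|$ and $\|\mathbf{u}\|\,\|\mathbf{v}\|=t^2$, so
\[ S\!\left(\overrightarrow{\mathbf 0\mathbf u},\overrightarrow{\mathbf 0\mathbf v}\right) \;=\; 1-\left(1-\frac{\|\mathbf u-\mathbf v\|}{2t}\right)^{\!\!2}, \]
which, viewed as a function of the distance $\|\mathbf u-\mathbf v\|$, is a fixed quadratic and hence at most $2$-to-$1$. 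Consequently $\mathcal{P}_{\mathbf{a},t}$ determines at least half as many distinct spreads with vertex $\mathbf{a}$ as it determines distinct distances, i.e.\ $\gg q$ of them; since there are at most $q$ spread values in all, this is a positive proportion of all spreads, completing the proof.
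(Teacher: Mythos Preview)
Your proof is correct and follows essentially the same route as the paper: pigeonhole onto a sphere of nonzero radius centered at a point of $\mathcal{P}$, invoke Theorem~\ref{018} to get $\gg q$ distances on that sphere, and then use the (at most) two-to-one map from distances to spreads. The paper's argument is terser --- it simply asserts that some $S_t$ with $t\ne 0$ carries $\gg q^{d/2}$ points and then rescales to $t=1$ via property~(i) --- whereas you supply the missing justification for $t\ne 0$ (and for the centre lying in $\mathcal{P}$) by bounding the number $N_0$ of isotropic pairs with the Iosevich--Rudnev Fourier estimate; this is exactly the detail the paper suppresses, not a different method.
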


We remark here that if $\P$ is a subset in the unit sphere $S_1$, the third listed author \cite{vinh} showed that for $\P\subseteq \mathbb{F}_q^3$ with $|\P|\gg q^{3/2}$, the number of occurrences of a fixed spread $\gamma$ among $\P$ is $\Theta\left(\frac{|\P|^2}{q}\right)$ if $1-\gamma$ is not a square in $\mathbb{F}_q$. 

The main purpose of this short note is to give sharp results on the number of distinct spreads determined by a large set in $\mathbb{F}_q^d$. Our main idea is to prove that for $\P \subset \mathbb{F}_q^d$ of size $|\P| = q^{\lceil d/2 \rceil}$,  there exists a point $\mathbf{p}$ in $\P$ such that it is incident to at least $(1-o(1))\frac{\alpha_{\eps}}{1+\eps}q^{d/2}$ lines spanned by $\P$. 

\paragraph{Statement of main results:}
Our first result gives us the number of distinct spreads determined by $\P\subseteq \mathbb{F}_q^d$ with $d$ even.
\begin{theorem}\label{spreads}
For any $\eps > 0$, there exists $c>0$ such that the following holds.
Let $\P$ be a set of points in $\mathbb{F}_q^d$ with $d\ge 2$ even. If $|\P| \geq  (1+\eps) q^{d/2}$, then the number of distinct spreads determined by $\P$ is at least $cq$.
\end{theorem}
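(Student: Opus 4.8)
The plan is to find one point $\mathbf p\in\P$ through which $\P$ spans $\gg q^{d/2}$ non-isotropic lines, and then to read off $\gg q$ distinct spreads from their directions by invoking Theorem~\ref{018}.

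\smallskip\noindent\textbf{Step 1 (a rich point, the crux).} I would first prove that if $|\P|\ge(1+\eps)q^{d/2}$ then some $\mathbf p\in\P$ lies on at least $c_\eps q^{d/2}$ lines that meet $\P$ in at least two points and have non-isotropic direction. For $d=2$ this is elementary: there are only $q+1$ directions; in each, $\P$ meets at most $q$ parallel lines, so at least $|\P|-q\ge\eps q$ points of $\P$ lie on a line carrying a second point of $\P$, and summing the $\ge(q+1)\eps q$ incidences so obtained over the $|\P|$ points gives a point on $\ge\frac{\eps}{1+\eps}q$ spanned lines, only $O(1)$ of whose directions are isotropic. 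For general even $d$ this is the main obstacle. Mere double counting is a factor $q$ too weak: every point $\mathbf p\in\P$ lies on at least $(|\P|-1)/(q-1)\sim q^{d/2-1}$ spanned lines, since those lines cover $\P\setminus\{\mathbf p\}$ and each carries at most $q$ points, and $q^{d/2-1}$ does not suffice. What one needs is a stability statement: the extremal configurations for this ``rich point'' problem at size $q^{d/2}=q^{\lceil d/2\rceil}$ are $(d/2)$-dimensional flats — e.g.\ totally isotropic subspaces, which additionally witness the sharpness claimed in the abstract — on which every point lies on only $\sim q^{d/2-1}$ lines, and one must show that exceeding this threshold by the factor $1+\eps$ forces some incidence count up to order $q^{d/2}$. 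The hypothesis must also be used to keep $\mathbf p$ off the isotropic cone — if $\mathbf p$ were incident almost only to isotropic directions then $\P$ would concentrate on that cone and almost all spreads at $\mathbf p$ would be undefined — which is legitimate because a pairwise orthogonal set of isotropic vectors spans a totally isotropic subspace and so has at most $q^{d/2}$ elements.

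\smallskip\noindent\textbf{Step 2 (directions onto a sphere).} Translate so that $\mathbf p=\mathbf 0$, and for each of the $\ge c_\eps q^{d/2}$ lines from Step~1 pick a direction vector; at least half of these have squared length in a common square class, and such a $\mathbf v$ may be rescaled to have $\|\mathbf v\|$ equal to one fixed $t\neq0$. By the scale invariance of the spread (property (i)), the spread between two rescaled vectors equals $S(\overrightarrow{\mathbf p\mathbf x},\overrightarrow{\mathbf p\mathbf y})=S(\mathbf x,\mathbf p,\mathbf y)$ for the corresponding $\mathbf x,\mathbf y\in\P$, hence is a spread determined by $\P$. Taking one vector from each antipodal pair gives a set $\mathcal V$ of $\gg q^{d/2}$ points of $S_t=\{\mathbf z:\|\mathbf z\|=t\}$; rescaling the whole space we may assume $\mathcal V\subseteq S_1$ (if $t$ is not a square, replace $S_1$ throughout by a fixed sphere of nonsquare radius, for which Theorem~\ref{018} holds by the same proof).

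\smallskip\noindent\textbf{Step 3 (distances to spreads).} For $d\ge4$ (so $d\ge 3$), Theorem~\ref{018} applied to $\mathcal V$, with its constant taken to be the implied constant in $|\mathcal V|\gg q^{d/2}$, yields at least $\min\{q/2,\,c_\eps q/4\}$ distinct distances in $\mathcal V$. On $S_1$ one has $\|\mathbf z-\mathbf z'\|=2-2(\mathbf z\cdot\mathbf z')$, so distinct distances are in bijection with distinct values of $\mathbf z\cdot\mathbf z'$; since $S(\mathbf z,\mathbf z')=1-(\mathbf z\cdot\mathbf z')^2$ and $x\mapsto 1-x^2$ is at most two-to-one, $\mathcal V$, and hence $\P$, determines at least $\tfrac12\min\{q/2,c_\eps q/4\}\gg q$ distinct spreads, proving the theorem. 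For $d=2$ the sphere is too small for Theorem~\ref{018}, but Step~1 already provides $\gg q$ distinct non-isotropic directions at $\mathbf p$, and for a fixed such direction $\mathbf v$ the equation $S(\mathbf v,\mathbf w)=\gamma$ in the projective coordinate of $\mathbf w$ is the vanishing of a binary quadratic form, hence has at most two solutions, so these directions realize $\gg q$ values of $\gamma$. In both cases $\P$ determines $\gg q$ spreads, so the only non-routine ingredient is the rich-point bound of Step~1 in dimensions $d\ge 4$.
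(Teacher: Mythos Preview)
Your overall architecture matches the paper's: find a point $\mathbf p\in\P$ incident to $\gg q^{d/2}$ spanned lines, push the directions onto a sphere, and invoke Theorem~\ref{018} together with the distance--spread dictionary. Steps~2 and~3 are essentially what the paper does.

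The gap is exactly where you say it is: Step~1 for $d\ge 4$. You correctly diagnose that the trivial covering bound gives only $\sim q^{d/2-1}$ spanned lines through each point and that something more is needed, but you do not supply it; you only describe what a ``stability'' argument would have to accomplish. The paper's solution is not a stability argument at the point level but a \emph{global count of spanned lines}. Concretely: take a generic projection $\pi:\mathbb{F}_q^d\to\mathbb{F}_q^{k}$ with $k=d/2+1$; since the expected number of colliding pairs is $\binom{|\P|}{2}q^{-k}=o(|\P|)$, one may choose $\pi$ so that $|\pi(\P)|=(1-o(1))|\P|\ge(1+\eps')q^{k-1}$. Then the Beck-type theorem of Lund--Saraf (Theorem~\ref{thm:becks}, proved via the expander mixing lemma on the point--line incidence graph) gives that $\pi(\P)$ spans $\ge(1-o(1))\alpha_\eps q^{2k-2}=\alpha_\eps q^{d}$ lines; a projection cannot increase the number of spanned lines, so $\P$ itself spans $\gg q^d$ lines. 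Now pigeonhole on the $\ge 2\cdot\alpha_\eps q^d$ point--line incidences over $|\P|\le(1+\eps)q^{d/2}$ points yields a point on $\gg q^{d/2}$ spanned lines. This is the missing ingredient; once you have it, the rest of your proposal goes through as written.
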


If $\P$ be a subset in $\mathbb{F}_q^d$ with $d$ odd, then we can embed $\P$ in $\mathbb{F}_q^{d+1}$ with the last coordinate of $0$. Therefore, as a direct consequence of Theorem \ref{spreads}, we obtain the following result.
\begin{theorem}\label{odd}
For any $\eps > 0$, there exists $c>0$ such that the following holds.
Let $\P$ be a set of points in $\mathbb{F}_q^d$ with $d\ge 3$ odd. If $|\P| \geq (1+\eps) q^{(d+1)/2}$, then the number of distinct spreads determined by $\P$ is at least $cq$.
\end{theorem}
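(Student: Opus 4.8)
The plan is to deduce Theorem~\ref{odd} from Theorem~\ref{spreads} by the dimension-lifting trick indicated just before the statement. Given $\P\subseteq\mathbb{F}_q^d$ with $d\ge 3$ odd, I would consider the injection $\iota\colon\mathbb{F}_q^d\to\mathbb{F}_q^{d+1}$ defined by $\iota(x_1,\dots,x_d)=(x_1,\dots,x_d,0)$, and set $\P'\assign\iota(\P)\subseteq\mathbb{F}_q^{d+1}$. Since $\iota$ is injective, $|\P'|=|\P|\ge(1+\eps)q^{(d+1)/2}$, and $d+1$ is even with $d+1\ge 4\ge 2$, so $\P'$ satisfies the hypotheses of Theorem~\ref{spreads} in ambient dimension $d+1$.

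The one point that genuinely requires checking is that the passage from $\P$ to $\P'$ neither creates nor destroys spreads. For any $\mathbf{a},\mathbf{b},\mathbf{c}\in\P$, the difference vectors $\overrightarrow{\iota(\mathbf{a})\iota(\mathbf{b})}$ and $\overrightarrow{\iota(\mathbf{a})\iota(\mathbf{c})}$ again have last coordinate $0$, so their dot product and their norms computed in $\mathbb{F}_q^{d+1}$ coincide with the corresponding quantities computed in $\mathbb{F}_q^d$. Consequently $S(\iota(\mathbf{b}),\iota(\mathbf{a}),\iota(\mathbf{c}))$ is defined exactly when $S(\mathbf{b},\mathbf{a},\mathbf{c})$ is, and the two values agree; hence the set of spreads determined by $\P'$ in $\mathbb{F}_q^{d+1}$ equals the set of spreads determined by $\P$ in $\mathbb{F}_q^d$.

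Applying Theorem~\ref{spreads} to $\P'$ then produces a constant $c=c(\eps)>0$, independent of $q$, such that $\P'$ determines at least $cq$ distinct spreads, and by the previous paragraph the same lower bound holds for $\P$, which is exactly the assertion of Theorem~\ref{odd}. I expect no real obstacle here: this is a one-line reduction once Theorem~\ref{spreads} is available, and the only things to be careful about are the elementary observation that the spread function is insensitive to appending zero coordinates, and the bookkeeping that the threshold $(1+\eps)q^{(d+1)/2}$ for odd $d$ is precisely the even-dimensional threshold $(1+\eps)q^{(d+1)/2}$ in dimension $d+1$.
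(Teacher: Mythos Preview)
Your proposal is correct and follows exactly the approach the paper uses: the authors remark that embedding $\P\subseteq\mathbb{F}_q^d$ into $\mathbb{F}_q^{d+1}$ by appending a zero last coordinate makes Theorem~\ref{odd} an immediate corollary of Theorem~\ref{spreads}. You have simply written out in detail the verification (that spreads are preserved under $\iota$ and that the size threshold matches) which the paper leaves implicit.
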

\paragraph{Sharpness of results:}
We show that the conditions on the size of $\P$ in Theorem \ref{spreads} and Theorem \ref{odd} are sharp.
\begin{theorem}\label{con1}
Suppose that either $q \equiv 1 \mod 4$ and $d$ even, or $q \equiv 3 \mod 4$ and $d \equiv 0 \mod 4$. 
Then there exists a subset $\P$ in $\mathbb{F}_q^d$ such that $|\P|=q^{d/2}$ and there is no spread determined by points in $\P$. 
\end{theorem}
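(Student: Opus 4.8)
The plan is to realize $\P$ as a linear subspace of $\F^d$ on which the form $\|\cdot\|$ vanishes identically. The key observation is that the spread $S(\mathbf{b},\mathbf{a},\mathbf{c})$ is \emph{undefined} as soon as $\|\mathbf{a}-\mathbf{b}\|=0$ or $\|\mathbf{a}-\mathbf{c}\|=0$; hence it is enough to produce a set $\P$ with $|\P|=q^{d/2}$ all of whose pairwise distances vanish. If $W\subseteq\F^d$ is a linear subspace that is totally isotropic for the quadratic form $Q(\mathbf{x})=x_1^2+\cdots+x_d^2=\|\mathbf{x}\|$, then for any $\mathbf{x},\mathbf{y}\in W$ we have $\mathbf{x}-\mathbf{y}\in W$, so $\|\mathbf{x}-\mathbf{y}\|=Q(\mathbf{x}-\mathbf{y})=0$. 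Taking $\P=W$ for such a $W$ of dimension $d/2$ then settles the theorem, and the entire content reduces to the linear-algebra fact that, under the stated congruence conditions, the diagonal form $\sum_{i=1}^d x_i^2$ admits a totally isotropic subspace of dimension $d/2$ (equivalently, has maximal Witt index).

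I would construct $W$ explicitly by pairing up the coordinates, treating the two cases separately. If $q\equiv 1\pmod 4$, fix $i\in\F$ with $i^2=-1$ and set $\mathbf{v}_k=\mathbf{e}_{2k-1}+i\,\mathbf{e}_{2k}$ for $1\le k\le d/2$, where $\mathbf{e}_1,\dots,\mathbf{e}_d$ denotes the standard basis; then $\|\mathbf{v}_k\|=1+i^2=0$ and $\mathbf{v}_k\cdot\mathbf{v}_\ell=0$ for $k\ne\ell$ since the supports are disjoint, so $W=\spn\{\mathbf{v}_1,\dots,\mathbf{v}_{d/2}\}$ is totally isotropic of dimension $d/2$. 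If $q\equiv 3\pmod 4$ and $d\equiv 0\pmod 4$, I would first note that every element of $\F$ is a sum of two squares — because the sets $\{x^2:x\in\F\}$ and $\{-1-y^2:y\in\F\}$ each have size $(q+1)/2$, so they meet — which gives $a,b\in\F$ with $a^2+b^2=-1$. Grouping the coordinates in blocks of four, for $1\le k\le d/4$ set
\[
\mathbf{v}_{2k-1}=\mathbf{e}_{4k-3}+a\,\mathbf{e}_{4k-1}+b\,\mathbf{e}_{4k},\qquad
\mathbf{v}_{2k}=\mathbf{e}_{4k-2}-b\,\mathbf{e}_{4k-1}+a\,\mathbf{e}_{4k}.
\]
Here $\|\mathbf{v}_{2k-1}\|=\|\mathbf{v}_{2k}\|=1+a^2+b^2=0$, $\mathbf{v}_{2k-1}\cdot\mathbf{v}_{2k}=-ab+ab=0$, and vectors from different blocks are orthogonal, so again $W=\spn\{\mathbf{v}_1,\dots,\mathbf{v}_{d/2}\}$ is totally isotropic of dimension $d/2$.

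In either case $|\P|=|W|=q^{d/2}$, every two points of $\P$ are at distance $0$, and consequently no spread is ever defined among points of $\P$, which is exactly the claim. There is no genuine obstacle: the only thing to pin down is the arithmetic that makes the standard form have a half-dimensional isotropic subspace precisely when $q\equiv1\pmod4$ (any even $d$) or $q\equiv3\pmod4$ with $4\mid d$ — this is the condition that $(-1)^{d/2}$ be a square in $\F$, i.e.\ that $\sum x_i^2$ be of ``$+$'' type — and the explicit bases above make it concrete without appealing to Witt's theorems. (One could instead describe $W$ in the second case through the norm form of $\mathbb{F}_{q^2}/\F$, as $\{(\mathbf{z},\alpha\mathbf{z})\}$ with $N(\alpha)=-1$, but the coordinate description is self-contained.)
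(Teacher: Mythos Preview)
Your proposal is correct and follows essentially the same approach as the paper: take $\P$ to be a totally isotropic $d/2$-dimensional subspace, constructed explicitly via $(1,i)$-blocks when $q\equiv 1\pmod 4$ and via an isotropic vector in three coordinates when $q\equiv 3\pmod 4$ and $4\mid d$. The only cosmetic difference is that the paper cites Chevalley--Warning for the existence of an isotropic $(a,b,c)\in\F^3$ and uses it directly, whereas you normalize to $a^2+b^2=-1$ via the pigeonhole argument; the resulting bases are the same up to relabeling.
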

\begin{theorem}\label{con2}
Suppose that either $q \equiv 1 \mod 4$ and $d$ odd, or $q \equiv 3 \mod 4$ and $d \equiv 1 \mod 4$.
Then there exists a subset $\P$ in $\mathbb{F}_q^d$ with such that $|\P|=q^{(d+1)/2}$ and the number of distinct spreads  determined by points in $\P$ is at most one.
\end{theorem}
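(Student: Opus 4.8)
The plan is to take $\P$ to be a linear subspace of $\mathbb{F}_q^d$ of dimension $(d+1)/2$ on which the quadratic form $\|\cdot\|$ has rank exactly one. On such a set, whenever a spread $S(\mathbf{b},\mathbf{a},\mathbf{c})$ is defined, the two relevant norms and the inner product all lie inside a one-dimensional subform, which forces the spread to equal $0$; hence $\P$ determines at most one spread. Concretely, I want $\P = V \oplus \langle \mathbf{e}\rangle$, where $\mathbf{e}=(0,\dots,0,1)$ (so $\|\mathbf{e}\|=1$) and $V$ is a totally isotropic subspace of dimension $(d-1)/2$ contained in the hyperplane $\{x_d=0\}$, so that $\mathbf{e}\perp V$.

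The first step is to produce $V$, and this is exactly where the hypotheses enter: the condition on $(q,d)$ in Theorem~\ref{con2} is precisely the condition of Theorem~\ref{con1} with $d$ replaced by the even integer $d-1$. Equivalently, it says that the form $x_1^2+\cdots+x_{d-1}^2$ on $\mathbb{F}_q^{d-1}$ has discriminant $1$ with $(-1)^{(d-1)/2}$ a square, hence is of $+$-type and has a totally isotropic subspace of the maximal dimension $(d-1)/2$. So the construction behind Theorem~\ref{con1}, applied in dimension $d-1$, supplies $V\subseteq \mathbb{F}_q^{d-1}\cong\{x_d=0\}$ with $\dim V=(d-1)/2$ and $\|v\|=0$ for all $v\in V$; since $q$ is odd, polarization gives $u\cdot v=0$ for all $u,v\in V$ as well. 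Then $\P:=V\oplus\langle\mathbf{e}\rangle$ has $|\P|=q^{(d-1)/2}\cdot q=q^{(d+1)/2}$, as required.

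The rest is a direct calculation. For $\mathbf{a},\mathbf{b},\mathbf{c}\in\P$, write $\mathbf{b}-\mathbf{a}=u_1+t_1\mathbf{e}$ and $\mathbf{c}-\mathbf{a}=u_2+t_2\mathbf{e}$ with $u_i\in V$ and $t_i\in\mathbb{F}_q$. Expanding and using $\|u_i\|=0$, $u_i\cdot\mathbf{e}=0$, $u_1\cdot u_2=0$ and $\|\mathbf{e}\|=1$, one gets $\|\mathbf{b}-\mathbf{a}\|=t_1^2$, $\|\mathbf{c}-\mathbf{a}\|=t_2^2$ and $(\mathbf{b}-\mathbf{a})\cdot(\mathbf{c}-\mathbf{a})=t_1t_2$. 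Hence $S(\mathbf{b},\mathbf{a},\mathbf{c})$ is defined precisely when $t_1t_2\neq 0$, and then $S(\mathbf{b},\mathbf{a},\mathbf{c})=1-(t_1t_2)^2/(t_1^2t_2^2)=0$. Therefore the only value a spread on $\P$ can take is $0$ (and this value does occur, e.g.\ for $\mathbf{0},\mathbf{e},2\mathbf{e}$), so $\P$ determines at most one spread.

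The only step that is not completely routine is the existence of the totally isotropic subspace $V$ of dimension $(d-1)/2$, which is the one place the parity and mod-$4$ hypotheses are needed; it rests on the standard classification of nondegenerate quadratic forms over $\mathbb{F}_q$ (a form in $2m$ variables is of $+$-type exactly when its discriminant equals $(-1)^m$ in $\mathbb{F}_q^*/(\mathbb{F}_q^*)^2$) together with the fact that $x_1^2+\cdots+x_{d-1}^2$ has discriminant $1$. Since this is the same fact that underlies Theorem~\ref{con1}, here it amounts to bookkeeping.
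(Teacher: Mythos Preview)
Your proof is correct and follows essentially the same construction as the paper: both take $\P=V\oplus\langle\mathbf{e}\rangle$ with $\mathbf{e}=(0,\dots,0,1)$ and $V\subseteq\{x_d=0\}$ a totally isotropic subspace of dimension $(d-1)/2$, obtained from the same lemmas (equivalently, from Theorem~\ref{con1} applied in dimension $d-1$). Your explicit calculation in fact shows that every defined spread equals $0$, whereas the paper states the value is ``one''; your computation is the correct one, though of course this does not affect the conclusion that at most one spread occurs.
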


The dependence of these constructions on the arithmetic properties of the underlying field is necessary, and the authors believe that it may be possible to improve Theorem \ref{spreads} in the case that $q \equiv 3 \mod 4$ and $d \equiv 2 \mod 4$.
See section $3$ for details.

The rest of this note is organized as follows: in Section $2$ we give a proof of Theorem \ref{spreads}, in Section $3$, we give proofs of Theorems \ref{con1} and \ref{con2}.
In Section $4$, we define spreads among more than three points, and leave the study of such spreads for future work.

\section{Proof of Theorem \ref{spreads}}

To prove Theorem \ref{spreads}, we make use of the following theorem due to the first listed author and Saraf in \cite{lund}.
\begin{theorem}[Corollary 5, \cite{lund}]\label{thm:becks}
For any $\eps >0$ and $\P \subseteq \mathbb{F}_q^d$ with $|\P| \geq (1+\eps)q^{d-1}$, the number of lines spanned by $\P$ is bounded below by $\alpha_{\eps}q^{2d-2}$, where $\alpha_\eps = \eps^2(1 + \eps + \eps^2)^{-1}$.
\end{theorem}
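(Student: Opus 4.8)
The plan is to bound the number $L$ of lines spanned by $\P$ directly in terms of the multiplicities $m_\ell := |\ell \cap \P|$, using only two exact counting identities, one trivial inequality, and a single application of Cauchy--Schwarz. Since deleting points only decreases the number of spanned lines, I may assume $|\P| = (1+\eps)q^{d-1}$. Write $r = (q^d-1)/(q-1)$ for the number of lines through a fixed point and $N = q^{d-1}r$ for the total number of lines in $\mathbb{F}_q^d$; note that $N$ has order $q^{2d-2}$, so the assertion is that $\P$ spans a positive proportion of all lines.

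First I would record the two identities $\sum_\ell m_\ell = |\P|\, r$ (each point lies on $r$ lines) and $\sum_\ell m_\ell(m_\ell-1) = |\P|(|\P|-1)$ (each ordered pair of distinct points lies on a unique line). The heart of the argument is the auxiliary quantity $W := \sum_{\ell:\, m_\ell \ge 2}(m_\ell - 1)$, the total ``excess'' incidence beyond one point per spanned line. Since $\sum_{\ell:\, m_\ell \ge 1}(m_\ell - 1) = \sum_\ell m_\ell - |\{\ell : m_\ell \ge 1\}|$ and at most $N$ lines meet $\P$, the first identity gives $W \ge |\P|\,r - N = r(|\P| - q^{d-1}) = \eps\, q^{d-1} r$. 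This is the only place the geometry of $\mathbb{F}_q^d$ enters, and it is exactly where the hypothesis $|\P| > q^{d-1}$ (more than a hyperplane's worth of points) is used.

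Next I would apply Cauchy--Schwarz over the $L$ spanned lines: $W^2 \le L \sum_{m_\ell \ge 2}(m_\ell - 1)^2$. The identity $\sum_{m_\ell \ge 2}(m_\ell-1)^2 = \sum_\ell m_\ell(m_\ell-1) - W = |\P|(|\P|-1) - W$ then yields $L \ge W^2 / \big(|\P|(|\P|-1) - W\big)$, the denominator being positive because $m_\ell \ge 2$ on spanned lines forces $|\P|(|\P|-1) \ge 2W$. The map $t \mapsto t^2/\big(|\P|(|\P|-1) - t\big)$ is increasing for $0 < t < |\P|(|\P|-1)$, so I may substitute the lower bound $W \ge \eps\, q^{d-1} r$. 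Using $|\P| = (1+\eps)q^{d-1}$ and $r = q^{d-1}(1+o(1))$, the numerator is $\eps^2 q^{4d-4}(1+o(1))$ and the denominator is $\big((1+\eps)^2 - \eps\big)q^{2d-2}(1+o(1)) = (1+\eps+\eps^2)q^{2d-2}(1+o(1))$, which gives $L \ge \frac{\eps^2}{1+\eps+\eps^2}\,q^{2d-2} = \alpha_\eps q^{2d-2}$.

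The main thing to get right is not any single hard estimate but the choice of the functional $W$: it is engineered so that the trivial bound $|\{\ell : m_\ell \ge 1\}| \le N$ turns the surplus $|\P| - q^{d-1}$ into a usable lower bound, while Cauchy--Schwarz against the exactly known pair count $|\P|(|\P|-1)$ produces precisely the denominator $1+\eps+\eps^2$. I expect the only delicate point to be the bookkeeping of lower-order terms (replacing $r = (q^d-1)/(q-1)$ by $q^{d-1}$ and accounting for the $-1$ in $|\P|(|\P|-1)$); since $r > q^{d-1}$ strictly and $|\P|(|\P|-1) < (1+\eps)^2 q^{2d-2}$, both corrections only strengthen the inequality, so the stated constant $\alpha_\eps = \eps^2(1+\eps+\eps^2)^{-1}$ is attained with no loss.
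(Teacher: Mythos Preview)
Your argument is correct and yields the exact constant $\alpha_\eps = \eps^2(1+\eps+\eps^2)^{-1}$ with no asymptotic loss: the inequalities $r>q^{d-1}$ and $|\P|(|\P|-1)<|\P|^2$ push the numerator up and the denominator down, so the final bound $L\ge \alpha_\eps q^{2d-2}$ holds on the nose. The monotonicity check on $t\mapsto t^2/(A-t)$ and the positivity of the denominator via $(m_\ell-1)^2\ge(m_\ell-1)$ are handled cleanly.

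However, your route is genuinely different from the one the paper sketches. The paper appeals to the expander mixing lemma on the bipartite point--line incidence graph of $\mathbb{F}_q^d$: taking $V_L=\P$ and $V_R$ the set of lines meeting $\P$ in at most one point, the trivial upper bound $e(V_L,V_R)\le |V_R|$ is played against the spectral lower bound to cap $|V_R|$ and hence force many spanned lines. Your proof bypasses the spectral machinery entirely, replacing it with the exact design identity $\sum_\ell m_\ell(m_\ell-1)=|\P|(|\P|-1)$ (every pair of points lies on a unique line) together with Cauchy--Schwarz on the excess $W=\sum_{m_\ell\ge2}(m_\ell-1)$. In effect, you are exploiting directly that the point--line structure is a $2$-design, which gives an \emph{equality} where the expander mixing lemma would give only an approximation. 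The payoff is a shorter, fully elementary argument with the same sharp constant; the trade-off is that the expander mixing approach generalises immediately to other incidence structures whose adjacency matrix has a good spectral gap, whereas your counting argument is tied to the ``two points determine a line'' property.
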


For the reader's convenience, we include a brief sketch of the proof of Theorem \ref{thm:becks}.
It is based on the expander mixing lemma for bipartite graphs ({\em e.g.} Lemma 8 in \cite{lund}).
The expander mixing lemma states that, if $G$ is a bipartite graph such that there is a large gap between the largest and second-largest eigenvalues of the adjacency matrix of $G$, and $V_L,V_R$ are sufficiently large subsets of the left and right vertices of $G$, then the number of edges between $V_L$ and $V_R$ is approximately the number that would be expected if $V_L,V_R$ were chosen uniformly at random.
To prove Theorem \ref{thm:becks}, we apply the expander mixing lemma to the incidence graph of points and lines in $\mathbb{F}_q^d$, with $V_L=\P$ and $V_R$ the set $L$ of lines that each contain at most one point of $\P$.
The fact that each line of $L$ contains at most one point of $\P$ gives an upper bound on the number of incidences between $\P$ and $L$, and a lower bound follows from the expander mixing lemma.
Combining these bounds gives Theorem \ref{thm:becks}.

By using Theorem \ref{thm:becks}, we are able to show in our following theorem that if the cardinality of $\P$ is much smaller than $q^{d-1}$, we still have many distinct lines spanned by $\P$. 
\begin{theorem}\label{thm5}
For any $0 < \eps < q-1$, let $\P \subseteq \mathbb{F}_q^d$ with $|\P| \geq (1+\eps)q^{k-1}$.
Then, the number of lines spanned by $\P$ is bounded below by $(1-o(1))\alpha_{\eps} q^{2k-2}$.
\end{theorem}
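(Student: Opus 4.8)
The plan is to reduce Theorem~\ref{thm5} to Theorem~\ref{thm:becks} by a random projection argument. The obstacle is that Theorem~\ref{thm:becks} requires a point set of size at least $(1+\eps)q^{d-1}$ in the \emph{full} space $\mathbb{F}_q^d$, whereas here we only assume $|\P| \geq (1+\eps)q^{k-1}$, which for $k < d$ is far below that threshold. The idea is to choose a generic $k$-dimensional subspace $W$ and project $\P$ onto $W$; a random such projection is injective on $\P$ with high probability once $|\P|$ is not too large compared to $q^k$, so the image $\P' = \pi(\P)$ has $|\P'| = |\P| \geq (1+\eps)q^{k-1}$ points in $\mathbb{F}_q^k \cong W$. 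Then Theorem~\ref{thm:becks} applied inside $W$ gives at least $\alpha_\eps q^{2k-2}$ lines spanned by $\P'$. Finally, distinct lines spanned by $\P'$ must (for a suitably generic projection) pull back to distinct lines spanned by $\P$, since two points $p, r \in \P$ with $\pi(p) \neq \pi(r)$ lie on a unique line whose image is the unique line through $\pi(p)$ and $\pi(r)$; hence $\P$ spans at least $\alpha_\eps q^{2k-2}$ lines as well, up to the $(1-o(1))$ loss coming from the probability that the projection fails to be injective.

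In more detail, the steps are as follows. First, parametrize projections: fix a $k$-dimensional subspace $W$ and consider linear maps $\pi : \mathbb{F}_q^d \to W$ that restrict to the identity on $W$; equivalently, choose a complementary $(d-k)$-dimensional subspace uniformly at random and project along it. For a fixed pair of distinct points $p, r \in \P$, the probability over this random choice that $\pi(p) = \pi(r)$ — i.e. that $p - r$ lies in the chosen complement — is $O(q^{-k})$. By the union bound over the at most $\binom{|\P|}{2} = O(q^{2k-2})$ pairs, the expected number of collided pairs is $O(q^{k-2}) = o(q^k)$, so with probability $1 - o(1)$ there is a projection $\pi$ that is injective on $\P$. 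Second, apply Theorem~\ref{thm:becks} in $W \cong \mathbb{F}_q^k$ to the set $\P' = \pi(\P)$ of size $\geq (1+\eps)q^{k-1}$: it spans at least $\alpha_\eps q^{2k-2}$ lines. Third, lift back: each line $\ell'$ spanned by $\P'$ contains two points $\pi(p), \pi(r)$ with $p \ne r \in \P$; the line $\ell$ through $p$ and $r$ in $\mathbb{F}_q^d$ satisfies $\pi(\ell) = \ell'$, so the map $\ell \mapsto \pi(\ell)$ from lines spanned by $\P$ to lines spanned by $\P'$ is surjective, giving the bound $\alpha_\eps q^{2k-2}$ on the number of lines spanned by $\P$.

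A technical wrinkle is that the injectivity loss should be folded into the $(1-o(1))$ factor honestly: rather than asking for a fully injective projection (which we have just argued exists with high probability and in fact suffices outright), one can alternatively discard the $o(q^k)$ "bad" points involved in collisions and still retain $\geq (1-o(1))(1+\eps)q^{k-1}$ points, so that Theorem~\ref{thm:becks} applies with a slightly smaller $\eps$; either route yields the claimed $(1-o(1))\alpha_\eps q^{2k-2}$ lower bound, where the $o(1)$ is as $q \to \infty$. The main obstacle, then, is purely the injectivity/genericity of the projection — once that is handled, the combinatorics of lifting lines is immediate because a line is determined by any two of its points and a linear surjection sends lines to lines. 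I would also note that the hypothesis $\eps < q - 1$ is exactly what is needed so that $(1+\eps)q^{k-1} < q^k$, leaving room for $\P$ to embed into $\mathbb{F}_q^k$ in the first place.
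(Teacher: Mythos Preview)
Your approach is essentially the paper's own proof: project $\P$ randomly onto a $k$-dimensional space, control the collisions, apply Theorem~\ref{thm:becks} to the image, and use that a linear projection cannot increase the number of spanned lines. The paper handles the collisions exactly via your ``alternative'' route, bounding $|\pi(\P)| \geq |\P| - E_{\mathrm{coll}} = (1-o(1))|\P|$ and then invoking Theorem~\ref{thm:becks}.

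One small correction: your claim that ``with probability $1-o(1)$ there is a projection $\pi$ that is injective on $\P$'' is not supported by the computation you give. The expected number of colliding pairs is $\Theta(q^{k-2})$, which diverges for $k \geq 3$, so Markov's inequality does not yield an injective projection; in fact one should not expect any projection to $\mathbb{F}_q^k$ to be injective on a set of size $(1+\eps)q^{k-1}$ once $k \geq 3$. This is harmless, since your fallback argument (discard the $O(q^{k-2}) = o(|\P|)$ points involved in collisions, or equivalently just use the image size bound) is correct and is precisely what the paper does.
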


\begin{proof}
Let $\pi'$ be a uniformly random projection from $\mathbb{F}_q^d$ to $\mathbb{F}_q^k$.

Let $\mathbf{a},\mathbf{b}$ be two arbitrary distinct points in $\mathbb{F}_q^d$.
We claim that the probability that $\pi'(\mathbf{a}) = \pi'(\mathbf{b})$ is less than $q^{-k}$.
Note that, if $\pi'(\mathbf{a}) = \pi'(\mathbf{b})$, then $\pi'(\mathbf{a}-\mathbf{x}) = \pi'(\mathbf{b}-\mathbf{x})$ for an arbitrary translation vector $\mathbf{x}$.
Hence, we may without loss of generality assume that $\mathbf{a} = \mathbf{0}$.
Then, the quesiton of whether $\pi'(\mathbf{a}) = \pi'(\mathbf{b})$ reduces to the question of whether $\mathbf{b}$ lies in the kernel of $\pi'$, which is a uniformly random $(d-k)$-dimensional linear subspace.
This probability is $(q^{d-k}-1)/q^d < q^{-k}$.

Hence, by linearity of expectation, the expected number of pairs $\mathbf{a},\mathbf{b} \in P$ such that $\pi'(\mathbf{a}) = \pi'(\mathbf{b})$, denoted by $E_{coll}$, is $E_{coll} < \binom{|\P|}{2}q^{-k} = (1-o(1))(1+\eps)^2q^{k-2}/2$.
In particular, there exists a projection $\pi$ from $\mathbb{F}_q^d$ to $\mathbb{F}_q^k$ such that the number of collisions is at most $E_{coll}$.
By a Bonferroni inequality, the image $\pi(\P)$ of $\P$ has size at least $|\pi(\P)| \geq |\P| - E_{coll}$. Thus $|\pi(\P)|=(1-o(1))|\P|$. The conclusion of the theorem follows from Theorem \ref{thm:becks}, and the observation that $\pi(\P)$ does not span more lines than $\P$.
\end{proof}

\begin{corollary}\label{co1}
Let $\P$ be a set of points in $\mathbb{F}_q^d$ with $d$ even, and $\L$ be the set of spanned lines by $\P$. Suppose that $|\P|=(1+\eps) q^{d/2}$, $\eps>0$, then there exists a point $\mathbf{p}$ in $\P$ such that it is incident to at least $(1-o(1))\frac{\alpha_{\eps}}{1+\eps}q^{d/2}$ lines from $\L$.
\end{corollary}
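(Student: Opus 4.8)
The plan is to feed the point set $\P$ into Theorem \ref{thm5} with the parameter $k \assign d/2 + 1$, and then convert the resulting lower bound on the number of spanned lines into a bound on the maximum number of spanned lines through a single point by a double-counting argument.

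First I would observe that, since $d \ge 2$ is even, we have $2 \le k = d/2+1 \le d$, so a random projection $\pi':\mathbb{F}_q^d\to\mathbb{F}_q^k$ makes sense and Theorem \ref{thm5} is applicable. With $|\P| = (1+\eps)q^{d/2} = (1+\eps)q^{k-1}$ (and using that $q$ is large, so that $\eps < q-1$), Theorem \ref{thm5} gives that the number of lines spanned by $\P$ satisfies
\[ |\L| \;\ge\; (1-o(1))\,\alpha_\eps\, q^{2k-2} \;=\; (1-o(1))\,\alpha_\eps\, q^{d}. \]
The point of the choice $k = d/2+1$ is that it makes the exponent $2k-2$ land exactly at $d$, which is what is needed so that dividing by $|\P| \sim q^{d/2}$ produces the target quantity $q^{d/2}$.

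Next I would double count incidences between $\P$ and $\L$. Writing $d_\L(\mathbf{p})$ for the number of lines of $\L$ passing through a point $\mathbf{p}\in\P$, every line of $\L$ is by definition spanned by $\P$ and hence meets $\P$ (in fact in at least two points), so
\[ \sum_{\mathbf{p}\in\P} d_\L(\mathbf{p}) \;=\; \sum_{\ell\in\L} |\ell\cap\P| \;\ge\; |\L| \;\ge\; (1-o(1))\,\alpha_\eps\, q^{d}. \]
By averaging (pigeonhole), there is some $\mathbf{p}\in\P$ with
\[ d_\L(\mathbf{p}) \;\ge\; \frac{|\L|}{|\P|} \;\ge\; (1-o(1))\,\frac{\alpha_\eps}{1+\eps}\, q^{d/2}, \]
which is the claimed bound. (One could in fact use $|\ell\cap\P|\ge 2$ for every $\ell\in\L$ to replace $|\L|$ by $2|\L|$ and gain an extra factor of $2$, but this is not needed.)

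This argument is short and essentially forced once Theorem \ref{thm5} is available; I do not expect a genuine obstacle. The only points demanding care are the bookkeeping of the parameter choice $k = d/2+1$, checking the hypotheses $k\le d$ and $\eps < q-1$, and carrying the $(1-o(1))$ factor (which originates from the collision bound $E_{coll}$ in the proof of Theorem \ref{thm5}) through to the final estimate.
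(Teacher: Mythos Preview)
Your proof is correct and follows essentially the same approach as the paper: apply Theorem \ref{thm5} with $k=d/2+1$ to get $|\L|\ge(1-o(1))\alpha_\eps q^{d}$, then use a pigeonhole/averaging argument to find a point incident to many spanned lines. The paper's proof is more terse (it just says ``by the pigeonhole principle''), while you spell out the incidence double-count and the bookkeeping on $k$ and $\eps$ explicitly, but the substance is identical.
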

\begin{proof}
It follows from Theorem \ref{thm5} that the number of lines spanned by $\P$ is bounded below by $(1-o(1))\alpha_{\eps}q^{d}$. By the pigeonhole-principle, there exists a point $\mathbf{p}$ in $\P$ such that it is incident to at least $(1-o(1))\frac{\alpha_{\eps}}{1+\eps}q^{d/2}$ lines, and the corollary follows.
\end{proof}
\paragraph{Proof of Theorem \ref{spreads}:}
By Corollary \ref{co1}, if $|\P| \geq (1+\eps)q^{d/2}$, then there exists a point $\mathbf{p}$ in $\P$ such that it is incident to at least $cq^{d/2}$ lines that are spanned by $\P$ for some positive constant $c$ depending on $\eps$. 

Suppose $d=2$.
Then, if $\sqrt{-1} \in \mathbb{F}_q$, then there are $q-1$ points of $\mathbb{F}_q^2$ at distance $0$ from $\mathbf{p}$, lying on a single isotropic line with slope $\sqrt{-1}$.
If $\sqrt{-1} \notin \mathbb{F}_q$, then there is no point distinct from $\mathbf{p}$ at zero distance from $\mathbf{p}$.
If $\mathbf{a}, \mathbf{b}, \mathbf{c} \in \P$ such are in three distinct, non-isotropic lines incident to $\mathbf{p}$, then an easy calculation shows that $S(\mathbf{a}, \mathbf{p}, \mathbf{b}) \neq S(\mathbf{a}, \mathbf{p}, \mathbf{c})$, which proves Theorem \ref{spreads} in the case $d=2$.

Suppose $d>2$.
We denote the set of lines incident to $\mathbf{p}$ by $\L'$. One can check that there exists a sphere $S_t$ of radius $t\ne 0$ such that $|S_t\cap \L'|\ge \frac{cq^{d/2}}{2}$. Without loss of generality, we assume that $\mathbf{p}=\mathbf{0}$ and $t=1$.  Theorem \ref{018} implies that $S_1\cap \L'$ determines a positive proportion of all distances. Thus Theorem \ref{spreads} follows from the connection between spreads and distances given in the introduction. $\square$

\section{Proofs of Theorems \ref{con1} and \ref{con2}}
In this section, we give constructions showing that our results are generally tight.

Vectors $\mathbf{v},\mathbf{u}$ are orthogonal if $\mathbf{v}\cdot \mathbf{u} = 0$.
If $\mathbf{v} \cdot \mathbf{v} = 0$, then $\mathbf{v}$ is isotropic.
Our construction relies on the existence of totally isotropic subspaces of dimension $d/2$.
Such subspaces have been considered before in a similar context, for example see \cite[Lemma 5.1]{hart}.

We consider two different cases, depending on whether $q \equiv 1 \mod 4$ or $q \equiv 3 \mod 4$.

\begin{lemma}\label{orthogonalIsotropic1}
If $q \equiv 1 \mod 4$ and $d$ is even, then there exist $d/2$ mutually orthogonal, linearly independent, isotropic vectors in $\mathbb{F}_q^d$.
\end{lemma}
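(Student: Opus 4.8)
The plan is to write the required vectors down explicitly, using the hypothesis $q \equiv 1 \pmod 4$ only to produce a square root of $-1$. Since $\mathbb{F}_q^{\times}$ is cyclic of order $q-1$ and $4 \mid q-1$, there is an element $i \in \mathbb{F}_q$ of order $4$; then $i^2$ is the unique element of order $2$ in $\mathbb{F}_q^{\times}$, namely $-1$, so $i^2 = -1$. Because $d$ is even we may partition the coordinates into the pairs $\{1,2\},\{3,4\},\dots,\{d-1,d\}$, and for $1 \le j \le d/2$ we define $\mathbf{v}_j \assign \mathbf{e}_{2j-1} + i\,\mathbf{e}_{2j}$, where $\mathbf{e}_1,\dots,\mathbf{e}_d$ is the standard basis of $\mathbb{F}_q^d$.

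It then remains to verify the three asserted properties, each of which is immediate. First, $\mathbf{v}_j \cdot \mathbf{v}_j = 1^2 + i^2 = 1 + (-1) = 0$, so every $\mathbf{v}_j$ is isotropic. Second, for $j \ne k$ the vectors $\mathbf{v}_j$ and $\mathbf{v}_k$ are supported on the disjoint coordinate pairs $\{2j-1,2j\}$ and $\{2k-1,2k\}$, hence $\mathbf{v}_j \cdot \mathbf{v}_k = 0$ and the vectors are mutually orthogonal. Third, the supports of the $\mathbf{v}_j$ are pairwise disjoint and each $\mathbf{v}_j \neq \mathbf{0}$, so the $d/2$ vectors are linearly independent. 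This produces the desired configuration, so there is no real obstacle: the only nontrivial ingredient is the standard fact that $-1$ is a square in $\mathbb{F}_q$ precisely when $q \equiv 1 \pmod 4$.

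It is worth flagging, for the companion statement proved next, why the case $q \equiv 3 \pmod 4$ needs the stronger hypothesis $d \equiv 0 \pmod 4$: there $x^2+y^2$ is anisotropic on $\mathbb{F}_q^2$, so one cannot build isotropic vectors from single coordinate pairs, and one instead has to locate a $2$-dimensional totally isotropic subspace inside each consecutive block of four coordinates (the form $x_1^2+x_2^2+x_3^2+x_4^2$ being hyperbolic of Witt index $2$ over any odd $\mathbb{F}_q$). That consumes dimensions in blocks of four rather than two, which is exactly the parity condition appearing in Lemma~\ref{orthogonalIsotropic1}'s $q \equiv 3 \pmod 4$ analogue and in Theorem~\ref{con1}.
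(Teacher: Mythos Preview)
Your proof is correct and is essentially identical to the paper's: both take $i\in\mathbb{F}_q$ with $i^2=-1$ and use the explicit vectors $\mathbf{v}_j=\mathbf{e}_{2j-1}+i\,\mathbf{e}_{2j}$, with your version simply spelling out the verifications in more detail. The final paragraph about the $q\equiv 3\pmod 4$ case is commentary rather than part of the proof, but it is accurate and aligns with the paper's subsequent discussion.
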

\begin{proof}
Since $q \equiv 1 \mod 4$, we have that $i=\sqrt{-1}$ is an element of $\mathbb{F}_q$.
Let $\mathbf{v}_1 = (1,i,0,\ldots,0)$, $\mathbf{v}_2 = (0,0,1,i,0,\ldots,0)$, $\ldots$, $\mathbf{v}_{d/2} = (0,0,\ldots,1,i)$.
It is easy to verify that these vectors satisfy the conclusion of the lemma.
\end{proof}

\begin{lemma}\label{orthogonalIsotropic3}
If $q \equiv 3 \mod 4$ and $d \equiv 0 \mod 4$, then there exist $d/2$ mutually orthogonal, linearly independent, isotropic vectors in $\mathbb{F}_q^d$.
\end{lemma}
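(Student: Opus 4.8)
The plan is to reduce the statement to a four-dimensional computation and then assemble the general case from disjoint blocks of coordinates. Write $d = 4m$, which is possible since $d \equiv 0 \bmod 4$, and split the coordinates of $\mathbb{F}_q^d$ into $m$ consecutive blocks of four. If I can exhibit, inside $\mathbb{F}_q^4$, two vectors that are isotropic, mutually orthogonal, and linearly independent, then placing a copy of such a pair in each block (supported on the four coordinates of that block and zero elsewhere) produces $2m = d/2$ vectors: within a block the needed relations are exactly the four-dimensional ones, while vectors from distinct blocks have disjoint supports and hence are automatically orthogonal and, restricting to the first two coordinates of each block, linearly independent. So the whole collection has the required properties.

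The core is therefore the case $d = 4$. Since $q \equiv 3 \bmod 4$, the element $-1$ is a non-square, so the device of Lemma \ref{orthogonalIsotropic1} (using $\sqrt{-1}$) is not available; however, $-1$ is still a sum of two squares in $\mathbb{F}_q$. To see this I would use a pigeonhole argument: the set of squares $\{x^2 : x \in \mathbb{F}_q\}$ and the set $\{-1 - y^2 : y \in \mathbb{F}_q\}$ each have size $(q+1)/2$, so their sizes total $q+1 > q$ and they must meet, giving $a, b \in \mathbb{F}_q$ with $a^2 + b^2 = -1$. Fixing such $a, b$, I would take $\mathbf{u}_1 = (1, 0, a, b)$ and $\mathbf{u}_2 = (0, 1, b, -a)$. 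Then $\mathbf{u}_1 \cdot \mathbf{u}_1 = 1 + a^2 + b^2 = 0$ and $\mathbf{u}_2 \cdot \mathbf{u}_2 = 1 + b^2 + a^2 = 0$, so both are isotropic; $\mathbf{u}_1 \cdot \mathbf{u}_2 = ab - ba = 0$, so they are orthogonal; and they are visibly linearly independent, since their restrictions to the first two coordinates are $(1,0)$ and $(0,1)$.

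I do not expect a genuine obstacle here: the only substantive input is the classical fact that $-1$ (indeed every element) is a sum of two squares in a finite field, and the one point requiring a little care is choosing the four-dimensional pair so that isotropy and mutual orthogonality hold simultaneously — the "obvious" choice $\mathbf{u}_2 = (0,1,a,b)$ gives $\mathbf{u}_1 \cdot \mathbf{u}_2 = a^2 + b^2 = -1 \neq 0$ and fails, whereas $(0,1,b,-a)$ works. After that, the passage from $d = 4$ to general $d \equiv 0 \bmod 4$ is just bookkeeping with disjoint supports, and the linear independence of the full set of $d/2$ vectors follows by restricting any vanishing linear combination to the first two coordinates of each block in turn.
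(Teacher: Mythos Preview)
Your proof is correct and follows essentially the same route as the paper: both reduce to exhibiting a pair of orthogonal, independent, isotropic vectors in $\mathbb{F}_q^4$ and then replicate across disjoint blocks of four coordinates. The only cosmetic difference is the source of the $4$-dimensional pair --- the paper starts from an isotropic vector $(a,b,c)\in\mathbb{F}_q^3$ (invoking Chevalley--Warning) and uses $(a,b,c,0)$ and $(0,-c,b,a)$, whereas you use the pigeonhole argument for $a^2+b^2=-1$ directly; these are equivalent, since when $q\equiv 3\bmod 4$ any nonzero isotropic $(a,b,c)$ has all coordinates nonzero and can be scaled to $c=1$.
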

\begin{proof}
It is a classical fact that there exists an isotropic vector in $\mathbb{F}_q^d$ for $d\geq 3$; for example, it is a special case of the Chevalley-Warning theorem.
Let $a,b,c$ so that $(a,b,c)$ is isotropic.
Let $\mathbf{v}_1 = (a,b,c,0,\ldots,0)$, $\mathbf{v}_2=(0,-c,b,a,0,\ldots,0)$, $\mathbf{v}_3=(0,0,0,0,a,b,c,0,\ldots,0)$, $\ldots$, $\mathbf{v}_{d/2} = (0,\ldots,0,-c,b,a)$.
It is easy to verify that these vectors satisfy the conclusion of the lemma.
\end{proof}

Note that the assumption that $d \equiv 0 \mod 4$ is necessary in Lemma \ref{orthogonalIsotropic3}.
For example, it can be verified by a direct calculation that there is no set of three mutually orthogonal, linearly independent, isotropic vectors in $\mathbb{F}_3^6$.
It is an interesting open question whether Theorem \ref{spreads} is tight when $q \equiv 3 \mod 4$ and $d \equiv 2 \mod 4$.

\paragraph{Proof of Theorem \ref{con1}:}
Suppose $d=2m$ with $d \geq 1$.
If $q \equiv 3 \mod 4$, then further suppose that $m$ is even. 
Let $\P$ be the subspace spanned by the mutually orthogonal isotropic vectors $\mathbf{v}_1, \ldots, \mathbf{v}_m$ given by Lemma \ref{orthogonalIsotropic1} or \ref{orthogonalIsotropic3}.
 Since $\|\mathbf{v}_i\|=0$ for all $1\le i \le m$, it follows from the definition of spread that there is no spread determined by three vectors in $\P$. On the other hand, the size of $\P$ is $q^{d/2}$, which ends the proof of the theorem. $\square$

\paragraph{Proof of Theorem \ref{con2}:}
Suppose $d=2m+1$ with $m\ge 1$.
If $q \equiv 3 \mod 4$, then further suppose that $m$ is even. 
Let $\mathbf{v}_1, \ldots, \mathbf{v}_m$ be the mutually orthogonal, isotropic vectors given by Lemma \ref{orthogonalIsotropic1} or \ref{orthogonalIsotropic3} in the $(d-1)$-dimensional subspace of $\mathbb{F}_q^d$ consisting of points whose last coordinate is $0$, and let $\mathbf{v}_{m+1} = (0,0,\ldots,0,1)$.
Let $\P$ be the subspace spanned by $\mathbf{v}_1, \ldots, \mathbf{v}_{m+1}$.
The size of $\P$ is $q^{(d+1)/2}$. It is easy to check that the spread determined by any triple of points in $\P$ is either undefined or one. Thus the number of distinct spreads determined by $\P$ is at  most one. This concludes the proof of the theorem. $\square$

\section{Higher order spreads}
As mentioned in the introduction, spreads are analogous to the sine of angles in Euclidean space.
It may be interesting to consider a generalization of this notion to spreads between more than two difference vectors.
This is an analog of polar sine in Euclidean space.

In more detail, Let $\mathbf{v}_1, \ldots, \mathbf{v}_k$ be vectors in $\mathbb{F}_q^d$, with $d \geq k$.
Let $V = [\mathbf{v}_1, \ldots, \mathbf{v}_k]$ be the $d \times k$ matrix formed by concatenating the column vectors of $\mathbf{v}_1, \ldots, \mathbf{v}_k$.
Define
\[\psin(\mathbf{v}_1, \ldots{v}_k) = \frac{\det(V^TV)}{\prod_{i=1}^k \| \mathbf{v}_i \|}.\]
For points $\mathbf{x}_1, \ldots, \mathbf{x}_{k+1}$, we define the $k$-spread to be
\[S_k(\mathbf{x}_1, \ldots, \mathbf{x}_{k+1}) = \psin(\mathbf{x}_2 - \mathbf{x}_1, \ldots, \mathbf{x}_{k+1} - \mathbf{x}_1).\]
It is straightforward to verify that, for $k=2$, this matches the definition of spread given in the introduction.

A natural question is, for any fixed $k$ and $c>0$, how large a subset of $\mathbb{F}_q^d$ can determine fewer than $cq$ $k$-spreads?
We leave this for future work.

\section{Acknowledgments}
Research of the first listed author was supported by NSF grants CCF-1350572 and DMS-1344994.
The second listed author was partially supported by Swiss National Science Foundation grants 200020-162884 and 200020-144531. The research of the third listed author is funded by the National Foundation for Science and Technology Development Project. 101.99-2013.21.

\vspace{1cc}
\hfill\\
Department of Mathematics, \\
University of Georgia\\
USA\\
E-mail: lund.ben@gmail.com\\
\bigskip\\
Department of Mathematics,\\
EPF Lausanne\\
Switzerland\\
E-mail: thang.pham@epfl.ch\\
\bigskip\\
University of Education, \\
Vietnam National University\\
Viet Nam\\
E-mail: vinhla@vnu.edu.vn
\end{document}